\numberwithin{equation}{section}
\newtheorem{theorem}{Theorem}[section]
\newtheorem{proposition}[theorem]{Proposition}
\newtheorem{lemma}[theorem]{Lemma}
\newtheorem{definition}{Definition}[section]
\newtheorem{remark}{Remark}[section]
\numberwithin{equation}{section}
\numberwithin{equation}{section}
\DeclareMathOperator{\spt}{spt}
\begin{document}
\title[$L^{p,q}$ estimates on the transport density]
{$L^{p,q}$ estimates on the transport density}
\author[S. Dweik]{Samer Dweik}
\address{Laboratoire de Math\'ematiques d'Orsay, Univ. Paris-Sud, CNRS, Universit\'e Paris-Saclay, 91405 Orsay Cedex, France}
\email{samer.dweik@math.u-psud.fr}
\maketitle
\begin{abstract} In this paper, we show a new regularity result on the transport density \,$\sigma$\, in the classical Monge-Kantorovich optimal mass transport problem between two measures, $\mu$ and \,$\nu$, having some summable densities, $f^+$ and \,$f^-$. More precisely, 
we 
 prove that the transport density \,$\sigma$\, belongs to \,$L^{p,q}(\Omega)$\, as soon as \,$f^+,\,f^- \in L^{p,q}(\Omega)$. 
\end{abstract}
\section{Introduction} Let \,$\mu$\, and \,$\nu$\, be two given non-negative Borel measures on a compact convex domain $\Omega \subset \mathbb{R}^d$, satisfying the mass balance condition $\mu(\Omega)=\nu(\Omega)$. Let $|\cdot|$ stand for the Euclidean norm in $\mathbb{R}^d$. The classical Monge problem (MP) \cite{Monge} consists of finding a {\it{transport map}}\, $T^*:\Omega \mapsto \Omega$\, minimizing the functional
$$ \int_{\Omega}|x-T(x)|\,\mathrm{d}\mu(x)$$\\
over all Borel measurable maps \,$T: \Omega \mapsto \Omega$\, satisfying \,$T_{\#}\mu=\nu$, where \,$T_{\#}$\, denotes the push-forward operator acting on every Borel measure \,$\mu$\, according to the formula \\
$$ T_{\#}\mu(B):=\mu(T^{-1}(B)) \;\,\,\mbox{for all Borel set}\,\, B\subset\Omega.$$
\\ This problem may have no solutions: this happens, for instance, when \,$\mu$\, is a Dirac mass and $\nu$\, is not. In \cite{Kanto},
 Kantorovich proposed a notion of weak solution to this transport problem. He suggested to look for {\it{transport plans}}\, instead of {\it{transport maps}}, i.e. non-negative measures \,$\gamma$\, on \,$\Omega \times \Omega$\, whose marginals are \,$\mu$\, and \,$\nu$. Formally, this means that $(\Pi_x)_{\#}\gamma=\mu$ and $(\Pi_y)_{\#}\gamma=\nu$, where $\Pi_x$ and $\Pi_y:\Omega \times \Omega \mapsto \Omega$ are the canonical projections. Denoting by $\Pi(\mu,\nu)$ the class of transport plans, he wrote the following minimization problem\\
$$\mbox{(KP)}\qquad \min\left\{\int_{\Omega \times \Omega}|x-y|\,\mathrm{d}\gamma:\;\gamma \in \Pi(\mu,\nu)\right\}.$$\\ 
Due to the convexity of the new constraint \,$\gamma \in \Pi(\mu,\nu)$\, and the linearity in \,$\gamma$\, of the functional, it turns out that weak topologies can be effectively used to provide existence of solutions to (KP). In fact, if $(\gamma_n)_n$ is a minimizing sequence, then, using Prokhorov's theorem, we have, up to a subsequence, $\gamma_n \rightharpoonup \gamma$ with $\gamma \in \Pi(\mu,\nu)$. Moreover, one has $\int_{\Omega \times \Omega}|x-y|\,\mathrm{d}\gamma_n \rightarrow \int_{\Omega \times \Omega}|x-y|\,\mathrm{d}\gamma$, which implies directly that $\gamma$ is optimal for (KP). \\
\\
The connection between the Kantorovich formulation of the transport problem and Monge's original one can be seen noticing that any transport map \,$T$\, induces a transport plan $\gamma$, defined by $(Id,T)_{\#} \mu$, which means that this plan is concentrated on the graph of \,$T$ in $\Omega \times \Omega$. We also see that the converse holds, i.e. whenever $\gamma$  is concentrated on a graph, then $\gamma$ is induced by a transport map. Since any transport map induces a transport plan with the same cost, it turns out that 
$$ \inf\, (\mbox{MP}) \geq \min \,(\mbox{KP}).$$
We also note that the equality $\inf\, (\mbox{MP})= \min \,(\mbox{KP})$ holds as soon as there is an optimal transport plan $\gamma$ which is concentrated on a graph $y=T(x)$. By the way, this map $T$ will be optimal for (MP). Yet, it has been really hard to give some answer about the existence of such an
optimal transport plan which is induced by a map.\\ \\
On the other hand, it is well known that the dual setting (DP) for the Monge-Kantorovich problem consists of finding a function $u$ (called {\it{Kantorovich potential}}) which maximizes the functional
$$ \int_{\Omega} v\,\mathrm{d}(\mu - \nu)$$\\
over all $v \in \mbox{Lip}_1(\Omega)$, where \,$\mbox{Lip}_1(\Omega)$\, stands for the set of Lipschitz continuous functions on $\Omega$ with Lipschitz constant one. This duality $\min\mbox{(KP)}=\sup\mbox{(DP)}$ implies that optimal $\gamma$ and $u$ satisfy
$$u(x) - u(y)=|x-y|\quad \mbox{on}\,\,\,\spt(\gamma).$$\\
We call {\it{transport ray}}\, any non-trivial (i.e., different from a singleton) segment $\left[x,y\right]$ such that $u(x) - u(y) = |x - y|$ that is maximal for the inclusion among segments of this form. Following this definition, we see that an optimal transport plan has to move the mass along the transport rays. And, it is well known that two different transport rays cannot intersect at an interior point of one of them (see, for instance, \cite{8}).\\ \\
Coming back to the problem of existence of optimal transport maps, Evans and Gangbo \cite{5} have made a remarkable progress showing by differential methods the existence of such a map, under the assumption that the two measures \,$\mu$\, and \,$\nu$\, are absolutely continuous with respect to $\mathcal{L}^d$, that their densities $f^+$ and $f^-$ are Lipschitz with compact supports and that \,$\spt(f^+) \cap \spt(f^-)=\emptyset$ (we note that after the work of Evans and Gangbo, Ambrosio in \cite{1} has proved that there exists an optimal transport map for the Monge problem provided that $\mu \ll \mathcal{L}^d$). A solution to the classical Monge-Kantorovich problem can be constructed by studying the $p-\mbox{Laplacian}$ equation 
$$ -\nabla \cdot(|\nabla u_p|^{p-2}\nabla u_p)=f^+-f^- $$
in the limit as $p \rightarrow +\infty$. They show that \,$u_p \rightarrow u$\, uniformly, where \,$u$\, is a Kantorovich potential between $f^+$ and $f^-$, and at the same time, they prove the existence of a special non-negative function \,$a$\, such that 
$$ -\nabla \cdot (a \nabla u)=f^+-f^-,\;|\nabla u|=1\;\;\,\mathcal{L}^d-\;\,\mbox{a.e. on}\,\; \{a>0\}.$$\\
The diffusion coefficient \,$a$\, in the PDE above plays a special role in the theory. Indeed, one can show that the measure \,$\sigma:=a \cdot \mathcal{L}^d$ (the so-called $\textit{transport density}$) can be represented in several different ways, and in particular as 
\begin{equation}\label{transport density def}
 \sigma(A)=\int_{\Omega \times \Omega} \mathcal{H}^1(A \cap [x,y])\,\mathrm{d}\gamma(x,y)\;\;\;\,\,\mbox{for all Borel set}\,\,\,A\subset \Omega,
 \end{equation} for some optimal transport plan $\gamma$, where \,$\mathcal{H}^1$ stands for the 1-dimensional Hausdorff measure. Its physical meaning is the work for transporting the mass through the set $A$. It has been proven in \cite{33,98} that if either $\mu$ or $\nu$ is absolutely continuous with respect to $\mathcal{L}^d$, then $\sigma$ is unique (i.e. does not depend on the choice of the optimal plan $\gamma$) and it is also absolutely continuous with respect to \,$\mathcal{L}^d$. Moreover, the authors of \cite{558,778,98} proved the following $L^p$ result on the transport density $\sigma$:
$$ f^+,\,f^- \in\;L^p(\Omega)\;\Rightarrow\;\sigma\;\in\;L^p(\Omega)\;\;\;\;\forall\;\,p\in\;[1,+\infty].$$
On the other hand, the transport density \,$\sigma$\, (see \eqref{transport density def}) is the total variation of a vector measure \,$v$\, solving the following problem (which is the {\it{continuous transportation problem}}\, proposed by Beckmann in \cite{Beck})
$$\mbox{(BP)} \qquad \min \left\{\int_{\Omega} \mathrm{d}|v|:\; v \in \mathcal{M}(\Omega,\mathbb{R}^d),\;\nabla\cdot v=\mu - \nu \right\},$$\\ 
where $\mathcal{M}(\Omega,\mathbb{R}^d)$ denotes the space of vector measures on $\Omega$. In fact, for a given optimal transport plan $\gamma$, let us define a vector measure \,$v^\gamma$\, and a scalar one \,$\sigma^\gamma$\, as follows 
\begin{equation}\label{vector measure}
 <v^\gamma,\psi>:=\int_{\Omega \times \Omega}\int_0^1 \psi(w_{x,y}(t)) \cdot {w}^\prime_{x,y}(t)\,\mathrm{d}t\,\mathrm{d}\gamma(x,y),\,\;\;\mbox{for all }\;\psi\in C(\Omega,\mathbb{R}^d),
 \end{equation}
and  
\begin{equation}\label{transport density}
 <\sigma^\gamma,\varphi>:=\int_{\Omega \times \Omega} \int_{0}^{1} \varphi(w_{x,y}(t)) |{w}^\prime_{x,y}(t)|\,\mathrm{d}t\,\mathrm{d}\gamma(x,y),\,\;\;\mbox{for all}\,\,\;\varphi\;\in\;C(\Omega),
 \end{equation}\\ 
where $w_{x,y}$ is a curve parameterizing the straight line segment connecting $x$ to $y$. Recalling \eqref{transport density def}, we observe easily that \,$\sigma^\gamma$\, is nothing but the transport density between \,$\mu$\, and \,$\nu$. It is not difficult to see that \,$v^\gamma=-\sigma^\gamma\,\nabla u$, where \,$u\,$ is a Kantorovich potential in the transportation of \,$\mu$\, onto \,$\nu$. In addition, one can show that the vector measure
\,$v^\gamma$\, is, in fact, a minimizer for (BP) and, one has
 $$ \min \mbox{(BP)}=\sigma^\gamma(\Omega)=\int_{\Omega \times \Omega} |x-y|\,\mathrm{d}\gamma=\min\mbox{(KP)}=\sup \mbox{(DP)}.$$\\ Moreover, every minimizer \,$v$\, for (BP) is of the form $v=v^\gamma$ \eqref{vector measure}, for some optimal transport plan $\gamma$ (see \cite{8}). This implies that if the source measure \,$\mu$\, or the target one \,$\nu$\, is absolutely continuous with respect to \,$\mathcal{L}^d$, then (BP)
  has a unique minimizer which is, by the way, in $L^1(\Omega,\mathbb{R}^d)$. So, this provides existence and uniqueness of the minimizer for the following minimal flow problem 
 \begin{equation} \label{minimal flow problem}
 \min \left\{\int_{\Omega}|v|\,\mathrm{d}x:\; v \in L^1(\Omega,\mathbb{R}^d),\;\nabla\cdot v= f,\,\,v \cdot n=0\,\,\,\mbox{on}\,\,\,\partial\Omega \right\}.
 \end{equation} \\  
 We recall that the unique minimizer $v$ of \eqref{minimal flow problem} belongs to $L^p(\Omega,\mathbb{R}^d)$ as soon as $f \in L^p(\Omega)$.
The goal of this paper is to generalize this result
  by showing the following novel one about the $L^{p,q}$ regularity of the transport density $\sigma$:
$$ f^+,\,f^-\;\in\;L^{p,q}(\Omega)\Rightarrow \;\sigma\;\in\;L^{p,q}(\Omega)$$
where $$L^{p,q}(\Omega)=\bigg\{f:\Omega \mapsto \mathbb{R}\,\,\,\mbox{measurable}\,:\,||t (\mathcal{L}^d(\{|f|\geq t\}))^{\frac{1}{p}}||_{L^q(\mathbb{R}^{+},\frac{\mathrm{d}t}{t})} < \infty\bigg\}.$$
\section{New estimates on the transport density}
In \cite{558,778,98}, the authors have already showed, using different techniques, the following $L^p$ summability on the transport density $\sigma$:
\begin{equation} \label{3.1}
f^\pm \in L^p(\Omega) \Rightarrow \sigma \in L^p(\Omega),\,\,\,\,\forall \,\,\,p\in [1,\infty].
\end{equation}
Here, our aim is to extend this result to the Lorentz space (see Appendix \ref{Appendix}). This means that we will prove the following implication
\begin{equation} \label{3.2}
f^\pm \in L^{p,q}(\Omega) \Rightarrow \sigma \in L^{p,q}(\Omega),\,\,\,\forall \,\,p \in (1,\infty),\,q \in [1,\infty].
\end{equation}
In this way, \eqref{3.1} becomes a particular case of \eqref{3.2}, when $q=p$. The strategy of the proof (which is already used in \cite{98}) is based on a displacement interpolation and an approximation by discrete measures. In all that follows at least one between \,$\mu$\, and \,$\nu$\, will be absolutely continuous with respect to \,$\mathcal{L}^d$. Then, there will exist an optimal transport map \,$T$\, for (MP) from \,$\mu$\, to \,$\nu$\, (or, from \,$\nu$\, to \,$\mu$) and one unique transport density \,$\sigma$\, associated to those measures (independent of the optimal transport plan $\gamma$). 
First, let us suppose that the target measure $\nu$ is finitely atomic and let us denote by $(x_i)_{i=1,...,n}$ its atoms, that is $$ \nu=\sum_{i=1}^n \alpha_i \,\delta_{x_i}.$$ 
\begin{proposition} \label{Prop 3.1}
Suppose that $\mu=f^+ \cdot \mathcal{L}^d$ with $f^+\in L^{p,q}(\Omega)$. If $\;p< d^\prime:=d/(d-1)$, then the unique transport density \,$\sigma$ associated with the transport of \,$\mu$ onto \,$\nu$ belongs to $L^{p,q}(\Omega)$.
\end{proposition}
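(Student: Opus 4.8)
The plan is to use the explicit geometry of optimal transport onto a finitely atomic target to reduce the claim to a one\nobreakdash-dimensional weighted Hardy inequality along transport rays, and then to upgrade from $L^r$ to $L^{p,q}$ by real interpolation. First I would fix an optimal transport map $T$ from $\mu$ to $\nu$ (which exists since $\mu\ll\mathcal L^d$) together with a Kantorovich potential $u$, and set $\Omega_i:=\{x\in\Omega:\ u(x)-u(x_i)=|x-x_i|\}$. Applying the $1$\nobreakdash-Lipschitz bound for $u$ on the segment $[x,x_i]$ shows that $u(z)-u(x_i)=|z-x_i|$ for every $z\in[x,x_i]$ whenever $x\in\Omega_i$; hence each $\Omega_i$ is star\nobreakdash-shaped with respect to $x_i$, the $\Omega_i$ cover $\Omega$ and are pairwise disjoint up to an $\mathcal L^d$\nobreakdash-null set (the set of transport\nobreakdash-ray boundary points), and $\sigma$ restricts on $\Omega_i$ to the transport density $\sigma_i$ of $f^+\mathbf 1_{\Omega_i}\mathcal L^d$ onto $\alpha_i\delta_{x_i}$. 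Inserting $\gamma=(\mathrm{Id},T)_\#\mu$ into \eqref{transport density def} and passing to polar coordinates centred at $x_i$ then gives, for $x=x_i+\rho\,\omega\in\Omega_i$ ($\omega\in S^{d-1}$, $\rho=|x-x_i|$),
$$\sigma_i(x)=\frac{1}{\rho^{\,d-1}}\int_\rho^{R_i(\omega)}f^+(x_i+r\omega)\,r^{d-1}\,\mathrm dr,\qquad R_i(\omega)\le\mathrm{diam}(\Omega),$$
where $R_i(\omega)$ is the exit radius of $\Omega_i$ in direction $\omega$ (these are standard structural facts for transport onto finitely many points, cf.\ \cite{98}).

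The second step is to turn the assignment $f^+\mapsto\sigma$ into a genuine linear operator, since a priori it is \emph{not} linear (the map $T$ itself depends on $f^+$). To this end I would ``unfold'' the cells: the maps $x\mapsto(|x-x_i|,(x-x_i)/|x-x_i|)$ identify $\bigsqcup_i\Omega_i$, measure\nobreakdash-preservingly, with a subset of $(0,\infty)\times\Sigma$, where $\Sigma:=S^{d-1}\times\{1,\dots,n\}$ carries the product of $\mathcal H^{d-1}$ with the counting measure and the total space carries $r^{d-1}\mathrm dr\otimes\mathrm d\mathfrak m$. Because the Lorentz (quasi)norm depends only on the distribution function, this unfolding is an isometry of every $L^{p,q}$; it sends $f^+$ to some $\tilde f$ and $\sigma$ to $\tilde\sigma=\mathcal T\tilde f$, where $\mathcal T$ is the fixed linear operator acting slice by slice over $\Sigma$ by the one\nobreakdash-dimensional averaging $g\mapsto\bigl(s\mapsto s^{1-d}\int_s^{\ell(\cdot)}g(r)r^{d-1}\mathrm dr\bigr)$.

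It then remains to bound $\mathcal T$ and interpolate. Working one slice at a time, with $h(r):=g(r)r^{d-1}$ and $\beta:=d-(d-1)r_0$, the estimate $\|\mathcal Tg\|_{L^{r_0}(s^{d-1}\mathrm ds)}\le C\|g\|_{L^{r_0}(s^{d-1}\mathrm ds)}$ is exactly the weighted Hardy inequality
$$\int_0^\infty\Bigl(\int_s^\infty h(r)\,\mathrm dr\Bigr)^{r_0}s^{\beta-1}\,\mathrm ds\ \le\ \Bigl(\tfrac{r_0}{\beta}\Bigr)^{r_0}\int_0^\infty h(s)^{r_0}s^{\,r_0+\beta-1}\,\mathrm ds,$$
which holds \emph{precisely when} $\beta>0$, i.e.\ for $r_0<d'=d/(d-1)$; substituting $h=g\cdot(\cdot)^{d-1}$, the right side equals $(r_0/\beta)^{r_0}\int_0^\infty g(s)^{r_0}s^{\,d-1+r_0}\mathrm ds\le(r_0/\beta)^{r_0}\mathrm{diam}(\Omega)^{r_0}\int_0^\infty g(s)^{r_0}s^{d-1}\mathrm ds$. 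Integrating over $\Sigma$ gives $\|\mathcal T\|_{L^{r_0}\to L^{r_0}}\le C(d,r_0)\,\mathrm{diam}(\Omega)$ for every $r_0\in[1,d')$. Finally, given $p\in(1,d')$ and $q\in[1,\infty]$, I would choose $1<p_0<p<p_1<d'$: since $\mathcal T$ is linear and bounded on $L^{p_0}$ and on $L^{p_1}$, real interpolation (Marcinkiewicz), through $(L^{p_0},L^{p_1})_{\theta,q}=L^{p,q}$, yields $\mathcal T\colon L^{p,q}\to L^{p,q}$; unfolding back, $\sigma\in L^{p,q}(\Omega)$ with $\|\sigma\|_{L^{p,q}}\lesssim\|f^+\|_{L^{p,q}}$.

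The main obstacle is conceptual rather than computational: the threshold $p<d'$ is not an artefact of the method but exactly the range in which the weighted Hardy inequality above is true, so the real content is (i) verifying that the transport density localizes cleanly onto the star\nobreakdash-shaped cells $\Omega_i$ with the stated radial formula, and (ii) recasting the a priori nonlinear map $f^+\mapsto\sigma$ as a single fixed linear operator on a fixed measure space so that interpolation is legitimate; the Hardy estimate itself and the invariance of Lorentz norms under the unfolding are then routine.
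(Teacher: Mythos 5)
Your argument is correct in substance, but it takes a genuinely different route from the paper. The paper never writes a pointwise formula for $\sigma$: it uses the displacement interpolation $\mu_t=(\Pi_t)_{\#}\gamma$, the bound $\sigma\le C\int_0^1\mu_t\,\mathrm{d}t$, Minkowski's integral inequality in $L^{p,q}$ (whence $p>1$, $q\ge1$), and the fact that on each cell $\Omega_i(t)$ the density $f_t$ is a homothety of $f^+$ of ratio $1-t$, so that $\|f_t\|_{L^{p,q}}=(1-t)^{-d/p^\prime}\|f^+\|_{L^{p,q}}$; the restriction $p<d^\prime$ then appears as integrability of $(1-t)^{-d/p^\prime}$ on $(0,1)$. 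You instead use the cell decomposition only at $t=0$, derive the explicit radial formula for $\sigma$ on each star-shaped cell, reduce to the one-dimensional weighted Hardy inequality (where $p<d^\prime$ is exactly the condition $\beta=d-(d-1)r_0>0$), and upgrade from $L^{r_0}$ to $L^{p,q}$ by real interpolation; this avoids Minkowski's inequality in Lorentz spaces, makes the threshold $d^\prime$ transparent, and yields a constant $C(d,p,q)\,\mathrm{diam}(\Omega)$ uniform in $\nu$ (independent of the number of atoms), which is exactly what the limiting argument of Proposition \ref{Prop. 3.3} needs, so your estimate plugs into the rest of the paper equally well. Two points should be tightened, though neither is a real gap: (i) the operator $\mathcal{T}$ is not literally ``fixed'', since the exit radii $R_i(\omega)$ and the cells depend on $f^+$ and $\nu$; this is harmless --- either cap the integral at $\mathrm{diam}(\Omega)$ and use that the unfolded $\tilde f$ vanishes for $r>R_i(\omega)$, or observe that interpolation applies to each such operator with endpoint bounds uniform in the geometry --- but it must be said, because interpolation needs an operator on the whole space, not a bound at the single function $\tilde f$. (ii) The essential disjointness of the potential-defined cells $\{u(x)-u(x_i)=|x-x_i|\}$ holds because a point of $\Omega_i\cap\Omega_j$ is either a non-differentiability point of $u$ or collinear with $x_i$ and $x_j$, and the union of lines through pairs of atoms is Lebesgue-null only for $d\ge2$; in $d=1$ these cells can overlap on a set of positive measure, so there you should work directly with $T^{-1}(\{x_i\})$ (or treat the trivial one-dimensional case separately), and you should also note, as you implicitly do, that the overlap carries no mass of $\sigma$ because each $\sigma_i$ is absolutely continuous by your own radial formula.
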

\begin{proof}
Let $\gamma$ be an optimal transport plan from $\mu$ to $\nu$ and let $\sigma$ be the unique transport density between them.
Let $\mu_t$ be the standard interpolation between the two measures $\mu$ and $\nu$, that is $$\mu_t=(\Pi_t)_{\#}\gamma$$\\ where $\Pi_t(x,y)=(1-t)x + ty$. We see that \,$\mu_0=\mu$\, and \,$\mu_1=\nu$. Since the domain \,$\Omega$\, is bounded, it is evident, recalling \eqref{transport density}, that we have  $$ \sigma \leq C \int_0^1 \mu_t\,\mathrm{d}t. $$ 
Yet,
  $$||\sigma||_{L^{p,q}(\Omega)}^q=p \int_0^{+\infty}s^{q-1} (\mathcal{L}^d(\{x \in \Omega \,:\,\sigma(x)\geq s\}))^{\frac{q}{p}}\,\mathrm{d}s.$$
 As $$\mathcal{L}^d(\{x \in \Omega \,:\,\sigma(x) \geq s\}) \leq \mathcal{L}^d\bigg(\bigg\{x \in \Omega \,:\,
 \int_0^1 \mu_t(x)\,\mathrm{d}t\geq \frac{s}{C}\bigg\}\bigg),$$ 
 then it is easy to see that $$||\sigma||_{L^{p,q}(\Omega)} \leq C\;\bigg|\bigg|\int_0^1 \mu_t\,\mathrm{d}t\bigg|\bigg|_{L^{p,q}(\Omega)}.$$\\
Now, using Minkowski's inequality in the Lorentz space $L^{p,q}(\Omega)$, we get the following estimate \\$$||\sigma||_{L^{p,q}(\Omega)} \leq C_{p}\;\int_0^1||\mu_t||_{L^{p,q}(\Omega)}\;\mathrm{d}t.$$\\ 
 Since $\mu \ll \mathcal{L}^d$, then there exists an optimal transport map \,$T$\, from \,$\mu$\, to \,$\nu$. But \,$\nu$\, is finitely atomic, then \,$\Omega$\, is the disjoint union of a finite number of sets $\Omega_i=T^{-1}(\{x_i\})$. For each $i \in \{1,...,n\}$, set $$ \Omega_i(t):=(1-t)\Omega_i + tx_i,\,\,\,\,\forall\,\,\,t \in[0,1].$$\\
It is not difficult to see that \,$\Omega_i(t)$\, are essentially disjoint. Yet, $\mu_t$ is absolutely continuous and its density $f_t$ coincides on each set \,$\Omega_i(t)$\, with the density of a homothetic image of $f^+$, the homothetic ratio being $(1-t)$. This means that $f_t$ is concentrated on the union of \,$\Omega_i(t)$ and, for any $i \in \{1,..,n\}$, one has
  $$\qquad f_t(y)=\frac{f^+(\frac{y-tx_i}{1-t})}{(1-t)^d}\;\;\;\;\,\,\mbox{for a.e.}\;\;y \in \Omega_i(t).$$
For a fixed $s>0$, we have 
 $$\,\;\,\,\,\mathcal{L}^d(\{y\in\Omega\,:\,f_t(y) \geq s\}) $$
 
 $$=\sum_{i=1}^n \mathcal{L}^d\bigg(\bigg\{y\in\Omega_i(t) \,:\, \frac{f^+(\frac{y-tx_i}{1-t})}{(1-t)^d}\geq s\bigg\}\bigg)$$ 
 
 $$= \sum_{i=1}^n \mathcal{L}^d\bigg(\bigg\{y\in\Omega_i(t)\,:\, f^+\left(\frac{y-tx_i}{1-t}\right)\geq s(1-t)^d\bigg\}\bigg) $$
 
 $$=\sum_{i=1}^n (1-t)^d\,\mathcal{L}^d(\{x\in\Omega_i\,:\,f^+\left(x\right)\geq s(1-t)^d\})$$
 
 $$= (1-t)^d\,\mathcal{L}^d(\{x\in\Omega\,:\,f^+\left(x\right)\geq s(1-t)^d\}).$$\\
 Then, one has
 $$\int_0^{+\infty}s^{q-1}\,(\mathcal{L}^d(\{y \in \Omega\,:\,f_t(y) \geq s \}))^{\frac{q}{p}}\,\mathrm{d}s$$
 
 $$=\int_0^{+\infty}s^{q-1}(1-t)^{\frac{d q}{p}} (\mathcal{L}^d(\{x \in \Omega\,:\,f^+(x) \geq s(1-t)^d \}))^{\frac{q}{p}}\,\mathrm{d}s$$
 
 $$=\int_0^{+\infty}\bigg(\frac{s}{(1-t)^{d}}\bigg)^{q-1}(1-t)^{\frac{d q}{p}} \,(\mathcal{L}^d(\{x \in \Omega\,:\,f^+(x) \geq s \}))^{\frac{q}{p}}\frac{\mathrm{d}s}{(1-t)^d}$$
 
 $$=(1-t)^{-\frac{d q}{p^\prime}}\int_0^{+\infty}s^{q-1}\,(\mathcal{L}^d(\{x \in \Omega\,:\,f^+(x) \geq s \}))^{\frac{q}{p}}\mathrm{d}s,$$\\ 
where $\;p^\prime=p/(p-1)$. This implies that  $$||f_t||_{L^{p,q}(\Omega)} = (1-t)^{-\frac{d}{p^\prime}}\,||f^+||_{L^{p,q}(\Omega)}.$$
  Finally, we get $$||\sigma||_{L^{p,q}(\Omega)} \leq C_{p} \left(\int_0^1\;(1-t)^{-\frac{d}{p^\prime}}\mathrm{d}t\right) ||f^+||_{L^{p,q}(\Omega)}< +\infty. $$\\
Hence, $\sigma \in L^{p,q}(\Omega)$. $\qedhere$\\
\end{proof}
\begin{lemma} \label{Lemma 3.2}
Suppose that \,$\nu_n \rightharpoonup \nu$. If \,$\gamma_n\,$ is an optimal transport plan between \,$\mu$\, and \,$\nu_n$, then there exists a subsequence $(\gamma_{n_k})_{n_k}$ such that $\gamma_{n_k} \rightharpoonup \gamma $ and $\gamma \in \Pi(\mu,\nu)$ is an optimal transport plan between \,$\mu$ and \,$\nu$.
\end{lemma}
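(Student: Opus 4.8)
The plan is to run the classical \emph{stability of optimal transport plans} under narrow convergence, in the form adapted to the case where only the second marginal moves.

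\emph{Step 1 (compactness).} All the $\gamma_n$ are supported in the fixed compact set $\Omega\times\Omega$, and $\gamma_n(\Omega\times\Omega)=\mu(\Omega)=\nu_n(\Omega)$ stays bounded (testing $\nu_n\rightharpoonup\nu$ against the constant function $1\in C(\Omega)$ gives $\nu_n(\Omega)\to\nu(\Omega)$, and in particular $\mu(\Omega)=\nu(\Omega)$, so that $\Pi(\mu,\nu)\neq\emptyset$). Hence $(\gamma_n)_n$ is tight, and by Prokhorov's theorem there is a subsequence with $\gamma_{n_k}\rightharpoonup\gamma$ for some finite non-negative measure $\gamma$ on $\Omega\times\Omega$.

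\emph{Step 2 (marginals).} For $\varphi\in C(\Omega)$ one has $\int_{\Omega\times\Omega}\varphi(x)\,\mathrm{d}\gamma_{n_k}=\int_\Omega\varphi\,\mathrm{d}\mu$ for every $k$, so letting $k\to\infty$ gives $(\Pi_x)_{\#}\gamma=\mu$. Likewise $\int_{\Omega\times\Omega}\varphi(y)\,\mathrm{d}\gamma_{n_k}=\int_\Omega\varphi\,\mathrm{d}\nu_{n_k}\to\int_\Omega\varphi\,\mathrm{d}\nu$, hence $(\Pi_y)_{\#}\gamma=\nu$. Therefore $\gamma\in\Pi(\mu,\nu)$.

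\emph{Step 3 (optimality).} Since $|x-y|$ is continuous and bounded on $\Omega\times\Omega$, the convergence $\gamma_{n_k}\rightharpoonup\gamma$ yields $\int|x-y|\,\mathrm{d}\gamma_{n_k}\to\int|x-y|\,\mathrm{d}\gamma$, so it remains to identify this limit with $\min\mbox{(KP)}$ for the pair $(\mu,\nu)$. Here I would use the duality $\min\mbox{(KP)}=\sup\mbox{(DP)}$ recalled in the Introduction: $\int|x-y|\,\mathrm{d}\gamma_{n_k}=\max_{v\in\mathrm{Lip}_1(\Omega)}\int_\Omega v\,\mathrm{d}(\mu-\nu_{n_k})$. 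Normalising potentials by $v(x_0)=0$ for a fixed $x_0\in\Omega$, the admissible $v$'s form a compact subset $K$ of $C(\Omega)$ by the Arzel\`a--Ascoli theorem. The functional $\rho\mapsto\max_{v\in K}\int v\,\mathrm{d}\rho$ is weakly-$*$ lower semicontinuous (a supremum of continuous linear functionals), while the matching $\limsup$ inequality follows by extracting from maximisers $v_{n_k}\in K$ a further uniformly convergent subsequence $v_{n_k}\to v^*\in K$ and using $\int v_{n_k}\,\mathrm{d}(\mu-\nu_{n_k})\to\int v^*\,\mathrm{d}(\mu-\nu)$. Thus $\max_v\int v\,\mathrm{d}(\mu-\nu_{n_k})\to\max_v\int v\,\mathrm{d}(\mu-\nu)$, and combining with Steps 1--2 gives $\int|x-y|\,\mathrm{d}\gamma=\sup\mbox{(DP)}=\min\mbox{(KP)}$; since $\gamma\in\Pi(\mu,\nu)$, it is optimal.

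\emph{Main obstacle.} The delicate point is Step 3: because the constraint set $\Pi(\mu,\nu_n)$ itself varies with $n$, one cannot compare $\gamma$ directly with a fixed competitor in $\Pi(\mu,\nu)$, so minimality does not pass to the limit for free. Establishing continuity of the transport value along the sequence --- via the dual problem as above, or equivalently by gluing any $\tilde\gamma\in\Pi(\mu,\nu)$ with an optimal plan between $\nu$ and $\nu_n$ (whose cost is $W_1(\nu,\nu_n)\to 0$) to build competitors $\tilde\gamma_n\in\Pi(\mu,\nu_n)$ --- is precisely what closes this gap; the rest is routine.
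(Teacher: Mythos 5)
Your proposal is correct and follows essentially the same route as the paper: both pass to the limit via the duality $\min\mbox{(KP)}=\sup\mbox{(DP)}$, using Arzel\`a--Ascoli compactness of normalised Kantorovich potentials to show the optimal values converge, and then identify the cost of the limit plan $\gamma\in\Pi(\mu,\nu)$ with the optimal value. You simply spell out the Prokhorov and marginal-convergence steps that the paper leaves implicit.
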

\begin{proof}
For each \,$n$, let \,$u_n$ be a Kantorovich potential between \,$\mu$\, and \,$\nu_n$\, such that \,$\min u_n=0$. Then, we see easily that there is a subsequence $(u_{n_k})_{n_k}$ such that $u_{n_k} \rightarrow u$ uniformly in $\Omega$. Yet, we have
$$\int_{\Omega \times \Omega} |x-y|\,\mathrm{d}\gamma_{n_k}(x,y)=\int_\Omega u_{n_k}\,\mathrm{d}(\mu - \nu_{n_k}).$$\\ 
Then, passing to the limit, we get\\
$$\int_{\Omega \times \Omega} |x-y|\,\mathrm{d}\gamma(x,y)=\int_\Omega u\,\mathrm{d}(\mu - \nu).$$\\
This implies that \,$\gamma$\, is an optimal transport plan between \,$\mu$\, and \,$\nu$, and \,$u$\, is the corresponding Kantorovich potential. $\qedhere$\\
\end{proof}
\begin{proposition} \label{Prop. 3.3}
If \,$\mu=f^+ \cdot \mathcal{L}^d$\, with \,$f^+\in L^{p,q}(\Omega)$ and \,$\nu$\, is any non-negative measure on $\Omega$, then, if $\;p< d^\prime:=d/(d-1)$, the unique transport density \,$\sigma$ associated with the transport of \,$\mu$ onto \,$\nu$ belongs to $L^{p,q}(\Omega)$.
\end{proposition}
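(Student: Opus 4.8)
The plan is to deduce the general case from the finitely-atomic case of Proposition~\ref{Prop 3.1} by approximating $\nu$ with discrete measures, passing to the limit on the optimal plans via Lemma~\ref{Lemma 3.2}, and using lower semicontinuity of the Lorentz norm to keep the estimate in the limit.

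\textit{Discretization and a uniform bound.} First I would choose finitely atomic measures $\nu_n$ on $\Omega$ with $\nu_n(\Omega)=\mu(\Omega)$ and $\nu_n\rightharpoonup\nu$ (for instance, cut $\Omega$ into finitely many cubes of side $1/n$ and place at the center of each cube an atom whose mass is the $\nu$-measure of that cube). Since $\mu\ll\mathcal L^d$, for every $n$ there is a unique transport density $\sigma_n$ between $\mu$ and $\nu_n$, and since $p<d'$ we may apply Proposition~\ref{Prop 3.1} to get $\sigma_n\in L^{p,q}(\Omega)$ with
$$\|\sigma_n\|_{L^{p,q}(\Omega)}\;\le\; C_p\left(\int_0^1(1-t)^{-d/p'}\,\mathrm{d}t\right)\|f^+\|_{L^{p,q}(\Omega)}\;=:\;M\;<\;+\infty,$$
the integral being finite exactly because $p<d'$ is equivalent to $d/p'<1$. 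The crucial point is that $M$ does not depend on $n$.

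\textit{Passing to the limit.} Let $\gamma_n$ be an optimal plan from $\mu$ to $\nu_n$, so that $\sigma_n=\sigma^{\gamma_n}$ in the sense of \eqref{transport density}. By Lemma~\ref{Lemma 3.2} there is a subsequence with $\gamma_{n_k}\rightharpoonup\gamma$, an optimal plan from $\mu$ to $\nu$. For a fixed $\varphi\in C(\Omega)$ the map $(x,y)\mapsto\int_0^1\varphi\bigl((1-t)x+ty\bigr)\,|y-x|\,\mathrm{d}t$ is continuous and bounded on $\Omega\times\Omega$, so $\langle\sigma_{n_k},\varphi\rangle\to\langle\sigma^\gamma,\varphi\rangle$; hence $\sigma_{n_k}\rightharpoonup\sigma^\gamma$, and, $\mu$ being absolutely continuous, $\sigma^\gamma$ is the unique transport density $\sigma$ between $\mu$ and $\nu$.

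\textit{Lower semicontinuity of the Lorentz norm.} It remains to upgrade $\sigma_{n_k}\rightharpoonup\sigma$ to $\|\sigma\|_{L^{p,q}(\Omega)}\le M$. Because $p>1$, the densities of the $\sigma_{n_k}$ are equi-integrable: for any Borel set $A$ one has $\int_A\sigma_{n_k}\le\|\sigma_{n_k}\|_{L^{p,q}}\,\|\mathbbm 1_A\|_{L^{p',q'}}\le C\,M\,|A|^{1/p'}\to0$ as $|A|\to0$, uniformly in $k$. By Dunford--Pettis, along a further subsequence $\sigma_{n_k}\rightharpoonup g$ weakly in $L^1(\Omega)$, and by the previous step $g=\sigma$, so in particular $\sigma\in L^1(\Omega)$. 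Finally, the functional $g\mapsto\|g\|_{L^{p,q}(\Omega)}$ (set to $+\infty$ off $L^{p,q}$) is convex and, by Fatou applied to distribution functions along an a.e.-convergent subsequence, strongly lower semicontinuous on $L^1(\Omega)$; hence it is weakly lower semicontinuous, and $\|\sigma\|_{L^{p,q}(\Omega)}\le\liminf_k\|\sigma_{n_k}\|_{L^{p,q}(\Omega)}\le M<+\infty$, so $\sigma\in L^{p,q}(\Omega)$. Alternatively this last step can be run through the duality $(L^{p',q'})^*=L^{p,q}$ of the Appendix together with density of $C(\Omega)$, testing $\sigma_{n_k}\rightharpoonup\sigma$ against continuous functions.

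I expect the last step to be the delicate one: one has only weak convergence of the measures $\sigma_{n_k}$, so the distribution functions of their densities need not converge, and the lower semicontinuity of the Lorentz norm must be extracted via the $L^1$-weak compactness argument above — which is exactly where $p>1$ (through $\|\mathbbm 1_A\|_{L^{p',q'}}\sim|A|^{1/p'}\to0$) is really used — or via the duality of the Appendix. A secondary but essential point is that the constant $M$ of the first step is genuinely uniform in the discretization, which is precisely what the homothety computation in the proof of Proposition~\ref{Prop 3.1} provides.
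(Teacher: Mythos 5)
Your proposal follows essentially the same route as the paper: approximate $\nu$ by finitely atomic measures obtained from a grid, apply Proposition~\ref{Prop 3.1} to get a bound on $\|\sigma_n\|_{L^{p,q}}$ uniform in $n$, pass to the limit on the plans via Lemma~\ref{Lemma 3.2} so that $\sigma_n\rightharpoonup\sigma$ (the unique transport density, since $\mu\ll\mathcal{L}^d$), and conclude by lower semicontinuity of the Lorentz norm along the bounded sequence. The only difference is that you spell out the semicontinuity/compactness step (Dunford--Pettis, or duality with $L^{p',q'}$) that the paper simply asserts, which is a correct and welcome elaboration.
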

\begin{proof}
Let us consider a regular grid $G^n\subset\Omega$ composed of approximately $C n^d$ points (take $G^n=\frac{1}{n}\mathbb{Z}^d \cap \Omega$) and let \,$P_n$ be the projection map from \,$\Omega$\, to \,$G^n$.
Set $$\nu_n:=(P_n)_{\#}\nu.$$ Then, $\nu_n$ is atomic with at most $C n^d$ atoms and $\nu_n \rightharpoonup  \nu$. 
Let $\sigma_n$ be the transport density associated with the transport of \,$\mu$\, onto \,$\nu_n$. By Proposition \ref{Prop 3.1}, we have that \,$\sigma_n \in L^{p,q}(\Omega)$\, and 
$$||\sigma_n||_{L^{p,q}(\Omega)} \leq C_{p} \left(\int_0^1\;(1-t)^{-\frac{d}{p^\prime}}\,\mathrm{d}t\right) ||f^+||_{L^{p,q}(\Omega)}. $$\\
This inequality, which is true in the discrete case, stays true at the limit as well, indeed, by Lemma \ref{Lemma 3.2}, $\sigma_{n} \rightharpoonup \sigma$, where \,$\sigma$\, is the unique transport density associated with the transport of $\mu$ onto $\nu$. But $(\sigma_{n})_{n}$ is bounded in $L^{p,q}(\Omega)$, then 
$$||\sigma||_{L^{p,q}(\Omega)} \leq \liminf_n ||\sigma_n||_{L^{p,q}(\Omega)} \leq C_{p} \left(\int_0^1\;(1-t)^{-\frac{d}{p^\prime}}\,\mathrm{d}t\right) ||f^+||_{L^{p,q}(\Omega)}< +\infty $$
 and $$\sigma \in L^{p,q}(\Omega). $$
 \end{proof}
 \begin{remark} \label{Remark}
 If we denote by \,$\mu_{n,t}$\, the interpolation between the two measures \,$\mu$\, and \,$\nu_n$, then $$\mu_{n,t} \rightharpoonup \mu_t.$$\\
Moreover, if \,$f_{n,t}$\, denotes the density of \,$\mu_{n,t}$, then 
$(f_{n,t})_{n}$ is bounded in $L^{p,q}(\Omega)$ and so, we have
$$||f_t||_{L^{p,q}(\Omega)} \leq \liminf_n ||f_{n,t}||_{L^{p,q}(\Omega)} =(1-t)^{-\frac{d}{p^\prime}} ||f^+||_{L^{p,q}(\Omega)}.$$ 
\end{remark}

 \noindent By Remark \ref{Remark}, we see that the measures \,$\mu_t$\, inherit some regularity ($L^{p,q}$ summability) from \,$\mu$\, exactly as it happens for homotheties of ratio $1-t$. This regularity degenerates as $t \rightarrow 1$, but we saw that this degeneracy produced no problem for $ L^{p,q}$ estimates on the transport density $\sigma$, provided $p<d/(d-1)$.
 Yet, for $p \geq d/(d-1)$, we need to exploit another strategy: suppose both $f^+$ and $f^-$ share some regularity assumption (belong to  $L^{p,q}$). Then we can give estimate on \,$f_t$\, for \,$t \to 0$\, starting from $f^+$ and for $t \to 1$ starting from $f^-$. This will avoid the degeneracy. \\ \\
 In fact, this strategy works but we must pay attention to one thing: in the previous estimates, $f_t$ is obtained as a limit from discrete approximations and so, it doesn't share a priori the same behavior of piecewise homotheties of $f^+$. And, when we pass to the limit, we do not know which optimal transport plan $\gamma$ will be selected as a limit of the optimal transport plans $\gamma_n$. This was not an issue in Proposition \ref{Prop. 3.3}, thanks to the uniqueness of the transport density $\sigma$ (since any optimal transport plan $\gamma$ induces the same transport density $\sigma$). But here we want to glue together estimates on \,$f_t$\, for \,$t \to 0$\, which have been obtained by approximating \,$f^-$\, and estimates on \,$f_t$\, for \,$t \to 1$\, which come from the approximation of $f^+$. Should the two approximations converge to two different transport plans, we could not put together the two estimates and deduce anything on $\sigma$. So, the lack of uniqueness of optimal transport plans may create a problem. Hence, the idea is to consider a strictly convex cost as $|x-y|^{1+\varepsilon}$, where $\varepsilon >0$, instead of $|x-y|$ since, in this case, the corresponding optimal transport plan $\gamma_\varepsilon$ will be unique (see, for instance, \cite{8,11}). We note that this strategy is different from the one given in \cite{98} where the author shows that the ``monotone optimal transport plan" can be
approximated in both directions.\\
 
\noindent Set $\mu_{\varepsilon,t}:=(\Pi_t)_{\#} \gamma_\varepsilon$. Then, one can prove as above that the density \,$f_{\varepsilon,t}$\, of \,$\mu_{\varepsilon,t}$\, satisfies the following estimate
 $$||f_{\varepsilon,t}||_{L^{p,q}(\Omega)} \leq (1-t)^{-\frac{d}{p^\prime}} ||f^+||_{L^{p,q}(\Omega)}.$$ \\
In the same way, one can also prove that
 $$||f_{\varepsilon,t}||_{L^{p,q}(\Omega)} \leq t^{-\frac{d}{p^\prime}} ||f^-||_{L^{p,q}(\Omega)}.$$
Consequently, $$||f_{\varepsilon,t}||_{L^{p,q}(\Omega)} \leq \min\{(1-t)^{-\frac{d}{p^\prime}}||f^+||_{L^{p,q}(\Omega)},\; t^{-\frac{d}{p^\prime}}||f^-||_{L^{p,q}(\Omega)}\} $$
 $$ \,\,\,\,\,\,\leq  C\,\max\{ ||f^+||_{L^{p,q}(\Omega)},\;||f^-||_{L^{p,q}(\Omega)}\}.$$
\\ 
 Finally, we get the following:
 \begin{proposition}
 Suppose that $\mu=f^+ \cdot \mathcal{L}^d$ and \,$\nu=f^- \cdot \mathcal{L}^d$ with $f^\pm \in L^{p,q}(\Omega)$ and let $\sigma$ be the unique transport density associated with the transport of \,$\mu$ onto $\nu$. Then, $\sigma$ belongs to $L^{p,q}(\Omega)$ as well.
 \end{proposition}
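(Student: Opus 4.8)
The plan is to obtain the conclusion by a compactness argument in $\varepsilon$, starting from the two-sided estimate on $f_{\varepsilon,t}$ recorded just above the statement. Recall that for the strictly convex cost $|x-y|^{1+\varepsilon}$ the optimal plan $\gamma_\varepsilon$ is unique, it still moves mass along the straight segments $[x,y]$, and the measure $\sigma_\varepsilon:=\sigma^{\gamma_\varepsilon}$ defined by \eqref{transport density} is a transport density for this perturbed problem. Since $\Omega$ is bounded, exactly as in the proof of Proposition~\ref{Prop 3.1} one has $\sigma_\varepsilon\le C\int_0^1\mu_{\varepsilon,t}\,\mathrm{d}t$, so $\sigma_\varepsilon$ is absolutely continuous, and Minkowski's inequality in $L^{p,q}(\Omega)$ gives
$$||\sigma_\varepsilon||_{L^{p,q}(\Omega)}\le C_p\int_0^1||f_{\varepsilon,t}||_{L^{p,q}(\Omega)}\,\mathrm{d}t\le C_p\int_0^1\min\Bigl\{(1-t)^{-\frac{d}{p'}}||f^+||_{L^{p,q}(\Omega)},\;t^{-\frac{d}{p'}}||f^-||_{L^{p,q}(\Omega)}\Bigr\}\,\mathrm{d}t.$$
Splitting the integral at $t=1/2$ and bounding $(1-t)^{-d/p'}$ on $[0,1/2]$ and $t^{-d/p'}$ on $[1/2,1]$ by $2^{d/p'}$, one gets a bound
$$||\sigma_\varepsilon||_{L^{p,q}(\Omega)}\le C\max\{||f^+||_{L^{p,q}(\Omega)},\;||f^-||_{L^{p,q}(\Omega)}\}=:M$$
which is \emph{independent of} $\varepsilon$. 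This now works for every $p\in(1,\infty)$: the degeneracy of the exponent $(1-t)^{-d/p'}$ as $t\to1$ is harmless because near $t=1$ the minimum is controlled by the $f^-$ term, and symmetrically near $t=0$.

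Next I would let $\varepsilon\to0$. By Prokhorov's theorem, along a subsequence $\gamma_\varepsilon\rightharpoonup\gamma\in\Pi(\mu,\nu)$. Since $\Omega$ is bounded, $|x-y|^{1+\varepsilon}\to|x-y|$ uniformly on $\Omega\times\Omega$, so for every competitor $\gamma'\in\Pi(\mu,\nu)$,
$$\int|x-y|\,\mathrm{d}\gamma=\lim_{\varepsilon\to0}\int|x-y|^{1+\varepsilon}\,\mathrm{d}\gamma_\varepsilon\le\lim_{\varepsilon\to0}\int|x-y|^{1+\varepsilon}\,\mathrm{d}\gamma'=\int|x-y|\,\mathrm{d}\gamma',$$
hence $\gamma$ is optimal for (KP). Because $(x,y)\mapsto\int_0^1\varphi(w_{x,y}(t))\,|w'_{x,y}(t)|\,\mathrm{d}t$ is continuous on $\Omega\times\Omega$ for each $\varphi\in C(\Omega)$, the map $\gamma\mapsto\sigma^\gamma$ is continuous for the weak-$*$ topology, so $\sigma_\varepsilon\rightharpoonup\sigma^\gamma$; and since $\mu\ll\mathcal{L}^d$ the transport density is unique, so $\sigma^\gamma=\sigma$, the transport density of the original problem.

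Finally, since $(\sigma_\varepsilon)$ is bounded in $L^{p,q}(\Omega)$ (for $p\in(1,\infty)$) and $\sigma_\varepsilon\rightharpoonup\sigma$, the lower semicontinuity of the Lorentz norm — used in the same form already in Proposition~\ref{Prop. 3.3} and Remark~\ref{Remark} — yields
$$||\sigma||_{L^{p,q}(\Omega)}\le\liminf_{\varepsilon\to0}||\sigma_\varepsilon||_{L^{p,q}(\Omega)}\le M<+\infty,$$
which proves $\sigma\in L^{p,q}(\Omega)$.

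The one delicate point — the real content of the argument, and what I would expect to take most care with — is the two-sided estimate on $f_{\varepsilon,t}$ that is invoked at the start. Each of the two bounds is proved by approximating one of the measures ($f^-$ for the bound in terms of $f^+$ as $t\to0$, and $f^+$ for the bound in terms of $f^-$ as $t\to1$) by finitely atomic measures, running the homothety computation of Proposition~\ref{Prop 3.1} for the perturbed cost, and passing to the limit; this last step uses that, for the \emph{fixed} strictly convex cost $|x-y|^{1+\varepsilon}$, the optimal plan is unique, so both families of discrete approximations are forced to converge to the same $\gamma_\varepsilon$ — which is precisely why the two one-sided estimates may be glued together (with the linear cost this would fail, by the non-uniqueness discussed before the statement). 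One should also record that the interpolation map $(1-t)\,Id+tT_\varepsilon$ is essentially injective for $t\in[0,1)$, which is what keeps the homothetic pieces $\Omega_i(t)$ disjoint. All the other ingredients — the uniform bound, stability of optimal plans, weak continuity of $\gamma\mapsto\sigma^\gamma$, semicontinuity of the Lorentz norm — are routine.
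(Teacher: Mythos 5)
Your proof is correct and follows essentially the same route as the paper: a strictly convex perturbation $|x-y|^{1+\varepsilon}$ to guarantee uniqueness of $\gamma_\varepsilon$, the glued two-sided $L^{p,q}$ bound on $f_{\varepsilon,t}$, and a passage to the limit $\varepsilon\to 0$ using stability of optimal plans, uniqueness of the transport density, and lower semicontinuity of the Lorentz quasinorm under weak convergence. The only (immaterial) difference is that you bound the perturbed transport densities $\sigma^{\gamma_\varepsilon}$ uniformly in $\varepsilon$ and pass to the limit directly at the level of $\sigma$, whereas the paper passes to the limit in the interpolation densities $f_{\varepsilon,t}$ for each fixed $t$ and then recovers the bound on $\sigma$ from $\sigma\le C\int_0^1\mu_t\,\mathrm{d}t$ and Minkowski's inequality, as in Proposition \ref{Prop 3.1}.
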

 \begin{proof}
 Let $\gamma_{\varepsilon}$ be the unique optimal transport plan in the transportation of $\mu$ onto $\nu$ with transport cost $|x-y|^{1+\varepsilon}$. Then, we see easily that
  \,$\gamma_{\varepsilon} \rightharpoonup \gamma$, where \,$\gamma$\, is an optimal transport plan in the transportation of \,$\mu$\, onto \,$\nu$\, with transport cost $|x-y|$. 
This implies that $\mu_{\varepsilon,t} \rightharpoonup \mu_t$. Yet, $(f_{\varepsilon,t})_{\varepsilon}$ is bounded in $L^{p,q}(\Omega)$. Then,
 $$||f_t||_{L^{p,q}(\Omega)} \leq \liminf\limits_{\varepsilon}||f_{\varepsilon,t}||_{L^{p,q}(\Omega)}\leq C\,\max\{ ||f^+||_{L^{p,q}(\Omega)},\;||f^-||_{L^{p,q}(\Omega)}\},\,\,\,\,\,\forall\,\,\,t \in [0,1].$$\\
Consequently, $\sigma \in L^{p,q}(\Omega). \qedhere $
 \end{proof}
\section{Appendix: The Lorentz space} \label{Appendix}
\begin{definition}
 The Lorentz space on \,$\Omega$ is the space of measurable functions \,$f$ on \,$\Omega$ such that the following quasinorm is finite
$$ ||f||_{L^{p,q}(\Omega)}=p^{\frac{1}{q}}||t (\mathcal{L}^d(\{|f|\geq t\}))^{\frac{1}{p}}||_{L^q(\mathbb{R}^{+},\frac{\mathrm{d}t}{t})}$$
 where \,$0<p<\infty$\, and \,\,$0<q\leq\infty$. Thus, when $q<\infty$, $$||f||_{L^{p,q}(\Omega)}=p^{\frac{1}{q}}\left(\int_0^{+\infty}t^{q-1} (\mathcal{L}^d(\{|f|\geq t\}))^{\frac{q}{p}}\,\mathrm{d}t\right)^{\frac{1}{q}}$$
and, when $q=\infty$,
$$||f||_{L^{p,\infty}(\Omega)}^p=\sup_{t>0}\left(t^p \mathcal{L}^d(\{x:|f(x)|\geq t\})\right).$$
\end{definition}
\begin{remark}
 It is not difficult to observe that $L^{p,1}(\Omega) \subset L^{p,p}(\Omega)=L^p(\Omega) \subset L^{p,\infty}(\Omega)$, which means that the Lorentz spaces $L^{p,q}$ are generalisations of the $L^p$ spaces. 
 \end{remark}
On the other hand, the quasinorm is invariant under rearranging the values of the function $f$, essentially by definition. In particular, given a measurable function $f$ defined on $\Omega$, its decreasing rearrangement function $f^*:\mathbb{R}^+ \mapsto \mathbb{R}^+$ can be defined as\\
$$ f^*(t)=\inf\{ \alpha\;\in\;\mathbb{R}^+ \,:\, \mathcal{L}^d(\{x\;\in\;\Omega\,:\,|f(x)|> \alpha\}) \leq t\},$$\\
where, for notational convenience, inf $\emptyset$ is defined to be $+\infty$. 
It is easy to see that the two functions $|f|$ and $f^*$ are equimeasurable, meaning that \\$$\mathcal{L}^d(\{x\;\in\;\Omega\,:\,|f(x)|> \alpha\})=|\{t>0:f^*(t)>\alpha\}|,\;\;\;\forall\,\;\alpha>0. $$\\
So, the Lorentz quasinorms are given by 
$$||f||_{L^{p,q}}=\left(\int_0^{+\infty}\bigg(t^{\frac{1}{p}} f^*(t)\bigg)^{q}\frac{\mathrm{d}t}{t}\right)^{\frac{1}{q}} \;\;\mbox{when}\;\;q<\infty$$
and
$$||f||_{L^{p,\infty}}=\sup_{t>0}\; t^{\frac{1}{p}}f^*(t).$$
Set $$f^{**}(t)=\frac{1}{t}\int_0^t f^*(s) \,\mathrm{d}s.$$
We define
 $$ ||f||_{p,q}=\left(\int_0^{+\infty}\bigg(t^{\frac{1}{p}} f^{**}(t)\bigg)^{q}\frac{\mathrm{d}t}{t}\right)^{\frac{1}{q}} \;\;\mbox{when}\;\;q<\infty$$
and
$$||f||_{p,\infty}=\sup_{t>0}\; t^{\frac{1}{p}}f^{**}(t).$$
\begin{theorem}
If $1<p<\infty,\,1\leq q \leq \infty$, then $||.||_{p,q}$ is a norm on $L^{p,q}$ and hence, $(L^{p,q},||.||_{p,q})$ is a normed space. More precisely, 
$$ ||f||_{L^{p,q}} \leq ||f||_{p,q} \leq \frac{p}{p-1}||f||_{L^{p,q}}.$$
This means that the quasinorms $||.||_{L^{p,q}}$ and \,$||.||_{p,q}$ are equivalent. Moreover, $L^{p,q}$ is a Banach space and, the dual of $L^{p,q}$ is isomorphic to $L^{p^\prime,q^\prime}$, where \,$\frac{1}{p} + \frac{1}{p^\prime}=1$ and \,$\frac{1}{q} + \frac{1}{q^\prime}=1$
($L^{p,q}$ is also reflexive for \,$1<p\,,\,q<\infty$).
\end{theorem}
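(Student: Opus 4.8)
The plan is to establish the four assertions in turn---the two-sided equivalence of $\|\cdot\|_{L^{p,q}}$ and $\|\cdot\|_{p,q}$, the fact that $\|\cdot\|_{p,q}$ is a genuine norm, completeness, and finally duality and reflexivity---the last being the genuine difficulty. For the two-sided bound, the lower inequality $\|f\|_{L^{p,q}} \le \|f\|_{p,q}$ is immediate: since $f^*$ is nonincreasing, its running average $f^{**}(t) = \frac{1}{t}\int_0^t f^*(s)\,ds$ dominates $f^*(t)$ pointwise, and monotonicity of the weighted $L^q(\mathbb{R}^+,\frac{dt}{t})$ functional does the rest. The upper inequality $\|f\|_{p,q} \le \frac{p}{p-1}\|f\|_{L^{p,q}}$ is exactly the weighted Hardy inequality applied to the nonnegative nonincreasing function $g = f^*$: the Hardy averaging operator $g \mapsto \frac{1}{t}\int_0^t g$ is bounded on $L^q(\mathbb{R}^+, t^{q/p}\frac{dt}{t})$ with operator norm $\frac{p}{p-1}=p'$ for $1<p<\infty$ and $1 \le q \le \infty$ (the power $g(s)=s^{-1/p}$ shows the constant is sharp), and this is precisely where the hypothesis $p>1$ enters. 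I would prove the Hardy bound directly via Minkowski's integral inequality for $q\ge 1$, treating $q=\infty$ separately as a supremum estimate.

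For the norm property the key structural fact is the subadditivity $(f+g)^{**}(t) \le f^{**}(t) + g^{**}(t)$. This follows from the Hardy--Littlewood--P\'olya identity $\int_0^t h^*(s)\,ds = \sup\{\int_E |h| : \mathcal{L}^d(E) \le t\}$, which exhibits $t\,h^{**}(t)$ as a supremum of quantities sublinear in $h$, hence subadditive. Granting this, homogeneity is clear from $(cf)^{**} = |c|\,f^{**}$; the triangle inequality follows by combining subadditivity with Minkowski's inequality in $L^q(\mathbb{R}^+,\frac{dt}{t})$, legitimate precisely because $q \ge 1$; and positive-definiteness follows since $\|f\|_{p,q}=0$ forces $f^{**}\equiv 0$, hence $f^*\equiv 0$ and $f=0$ a.e. Thus $\|\cdot\|_{p,q}$ is a norm, and by the equivalence it induces the same topology as the quasinorm $\|\cdot\|_{L^{p,q}}$.

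Completeness I would obtain from the absolutely-convergent-series criterion: given $(f_n)$ with $\sum_n \|f_n\|_{p,q} < \infty$, the partial sums are Cauchy, and since $p>1$ and $\mathcal{L}^d(\Omega)<\infty$ the chain $L^{p,q}(\Omega)\hookrightarrow L^{p,\infty}(\Omega)\hookrightarrow L^1(\Omega)$ gives an a.e.\ limit $f$ of $\sum_n f_n$. Lower semicontinuity of the rearrangement under convergence in measure---via Fatou's lemma applied to $f^*$, using that a.e.\ convergence on a finite measure space implies convergence in measure---then yields $\|f - \sum_{n\le N} f_n\|_{p,q} \le \liminf_M \|\sum_{N<n\le M} f_n\|_{p,q} \to 0$, so the series converges in $L^{p,q}$ and the space is Banach.

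The main obstacle is the duality $(L^{p,q})^* \cong L^{p',q'}$. One half is a H\"older inequality for Lorentz spaces: from the Hardy--Littlewood bound $\int_\Omega |fg| \le \int_0^\infty f^*(t)g^*(t)\,dt$ followed by H\"older in $L^q(\frac{dt}{t})$ against $L^{q'}(\frac{dt}{t})$ with the conjugate weights $t^{1/p}, t^{1/p'}$, one gets $|\int fg| \le \|f\|_{L^{p,q}}\|g\|_{L^{p',q'}}$, so every $g \in L^{p',q'}$ defines a bounded functional. The hard direction is that every $\Phi \in (L^{p,q})^*$ arises this way: I would restrict $\Phi$ to indicators to build a set function, verify its absolute continuity with respect to $\mathcal{L}^d$ (this uses that the norm is absolutely continuous when $q<\infty$), invoke Radon--Nikodym to produce a density $g$ with $\Phi(f)=\int fg$ on the dense subspace of simple functions, and then show $g \in L^{p',q'}$ with $\|g\|_{p',q'} \le C\|\Phi\|$ by testing $\Phi$ against a near-extremal rearrangement of $g$; equivalently, one verifies that for these rearrangement-invariant spaces the associate space coincides with the dual. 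Reflexivity for $1<p,q<\infty$ then follows formally by applying the duality twice, $(L^{p,q})^{**} \cong (L^{p',q'})^* \cong L^{p,q}$, the composition being the canonical evaluation map, which is therefore surjective.
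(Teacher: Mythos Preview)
Your plan is substantively correct and follows the standard textbook route: Hardy's inequality for the upper bound $\|f\|_{p,q}\le p'\|f\|_{L^{p,q}}$, the Hardy--Littlewood identity $t\,h^{**}(t)=\sup_{\mathcal L^d(E)\le t}\int_E|h|$ for subadditivity of $f^{**}$ and hence the triangle inequality, the absolutely-summable-series criterion for completeness, and Radon--Nikodym plus a Lorentz H\"older inequality for duality. You even flag the right caveat, namely that the representation of the dual requires absolute continuity of the norm and so fails at $q=\infty$.

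The paper, however, does not prove this theorem at all: its entire proof is the single line ``See, for instance, \cite{Lorentz}.'' So there is nothing to compare at the level of argument; you have supplied precisely what the paper outsources to Castillo--Rafeiro, and your outline matches the treatment one finds there and in other standard references on rearrangement-invariant spaces.
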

\begin{proof}
See, for instance, \cite{Lorentz}. $\qedhere$
\end{proof}
{\bf{Acknowledgments:}}
the author would like to thank Prof. Filippo Santambrogio for interesting
suggestions and comments.
  

\begin{thebibliography}{99}
   \bibitem{1}{\sc L. Ambrosio,}
  \newblock  {Lecture notes on optimal transport problems}, in       \newblock {\it Mathematical aspects of evolving
interfaces}, Lecture Notes in Mathematics (1812) (Springer, New York, 2003), pp. 1-52.
  \bibitem{Beck}{\sc M. Beckmann,}
 \newblock  A continuous model of transportation,
 \newblock {\it Econometrica} 20, 643--660, 1952.
 \bibitem{Lorentz}{\sc R. E. Castillo and H. Rafeiro,}
An Introductory Course in Lebesgue Spaces, {\it{Springer International Publishing}}, 2016.
 \bibitem{558}{\sc L. De Pascale, L. C. Evans and A. Pratelli,}  \newblock Integral estimates for transport densities, 
    \newblock {\it Bull. of the London Math. Soc.} 36, n. 3, pp. 383--395, 2004.
 
 
  \bibitem{778}{\sc L. De Pascale and A. Pratelli,}
  \newblock Sharp summability for Monge Transport density via  Interpolation,
   \newblock{\it ESAIM Control Optim. Calc. Var.} 10, n. 4, pp. 549--552, 2004.
\bibitem{5}{\sc L. C. Evans and W. Gangbo,}
\newblock Differential equations methods for the Monge-Kantorovich mass
transfer problem,
\newblock {{\it Mem. Amer. Math. Soc.}, 137 (1999), no. 653.}
 \bibitem{33}{\sc M. Feldman and R. McCann,}
 \newblock Uniqueness and transport density in Monge's mass transportation problem,
 \newblock{{\it Calc. Var. Par. Diff. Eq.} 15, n. 1, pp. 81--113, 2002.}
\bibitem{Kanto} {\sc L. Kantorovich,}
 \newblock {On the transfer of masses,
{\it Dokl. Acad. Nauk. USSR}, (37), 7--8, 1942.}
 \bibitem{Monge}{\sc G. Monge,}
 \newblock{M\'emoire sur la th\'eorie des d\'eblais et des remblais,}
 \newblock{\it Histoire de l'Acad\'emie Royale
des Sciences de Paris} (1781), 666--704.
  \bibitem{98}{\sc F. Santambrogio,}
\newblock  Absolute continuity and summability of transport densities: simpler proofs and new estimates,
\newblock {\it Calc. Var. Par. Diff. Eq.} (2009) 36: 343--354.

\bibitem{8} {\sc F. Santambrogio}, {\it Optimal Transport for Applied Mathematicians} in {\it Progress in Nonlinear Differential Equations and Their Applications}  87, Birkh\"auser
Basel (2015).
\bibitem{11}{ \sc C. Villani},
 \newblock {\it Topics in Optimal Transportation}, Graduate Studies in Mathematics, Vol. 58, 2003.
\end{thebibliography}
\end{document}